\documentclass[11pt]{article}
\usepackage{amsmath,amssymb,amsfonts,amsthm}
\usepackage{float}
\numberwithin{equation}{section}
\newtheorem{theorem}{Theorem}[section]

\newtheorem{lemma}[theorem]{Lemma}

\pdfpagewidth 8.5in
\pdfpageheight 11in
\setlength\topmargin{0in}
\setlength\headheight{0in}
\setlength\headsep{0in}
\setlength\textheight{7.7in}
\setlength\textwidth{6.5in}
\setlength\oddsidemargin{0in}
\setlength\evensidemargin{0in}
\setlength\parindent{0.25in}
\setlength\parskip{0.25in} 
\begin{document}
\begin{center}
{\Large{\textbf{A note on the Brush Numbers of Mycielski Graphs, $\mu(G)$}}} 
\end{center}
\vspace{0.5cm}
\large{\centerline{(Johan Kok, Susanth C, Sunny Joseph Kalayathankal)\footnote {\textbf {Affiliation of author:}\\
\noindent Johan Kok (Tshwane Metropolitan Police Department), City of Tshwane, Republic of South Africa\\
e-mail: kokkiek2@tshwane.gov.za\\ \\
\noindent Susanth C (Department of Mathematics, Vidya Academy of Science and Technology), Thalakkottukara, Thrissur-680501, Republic of India\\
e-mail: susanth\_c@yahoo.com\\ \\
\noindent Sunny Joseph Kalayathankal (Department of Mathematics, Kuriakose Elias College), Mannanam, Kottayam- 686561, Kerala, Republic of India\\
e-mail: sunnyjoseph2014@yahoo.com}}
\vspace{0.5cm}
\begin{abstract}
\noindent The concept of the brush number $b_r(G)$ was introduced for a simple connected undirected graph $G$. The concept will be applied to the Mycielskian graph $\mu(G)$ of a simple connected graph $G$ to find $b_r(\mu(G))$ in terms of an \emph{optimal orientation} of $G$. We prove a surprisingly simple general result for simple connected graphs on $n \geq 2$ vertices, namely:\\ \\
$b_r(\mu(G)) = b_r(\mu^{\rightarrow}(G))= 2\sum\limits_{i=1}^{n}d^+_{G^{\rightarrow}_{b_r(G)}}(v_i).$
\end{abstract}
\noindent {\footnotesize \textbf{Keywords:} Brush number, Mycielskian graph}\\ \\
\noindent {\footnotesize \textbf{AMS Classification Numbers:} 05C07, 05C12, 05C20, 05C38, 05C70} 
\section{Introduction}
\noindent For a general reference to notation and concepts of graph theory see [1]. For ease of self-containess we shall briefly introduce the concepts of \emph{brush numbers} and \emph{Mycielskian graphs}.
\subsection{The brush number of a graph $G$}
\noindent The concept of the brush number $b_r(G)$ of a simple connected graph $G$ was introduced by McKeil [3] and Messinger et. al. [5]. The problem is initially set that all edges of a simple connected undirected graph $G$ is \emph{dirty}. A finite number of brushes, $\beta_G(v) \geq 0$ is allocated to each vertex $v \in V(G).$ Sequentially any vertex which has $\beta_G(v) \geq d(v)$ brushes allocated may clean the vertex $v$ and send exactly one brush along a dirty edge and in doing so allocate an additional brush to the corresponding adjavent vertex (neighbour). The reduced graph $G' = G - vu_{\forall vu \in E(G), \beta_G(v) \geq d(v)}$ is considered for the next iterative cleaning step. Note that a neighbour of vertex $v$ in $G$ say vertex $u$, now have $\beta_{G'}(u) =\beta_G(u) + 1.$\\ \\
\noindent Clearly for any simple connected undirected graph $G$ the first step of cleaning can begin if and only if at least one vertex $v$ is allocated, $\beta_G(v)= d(v)$ brushes. The minimum number of brushes that is required to allow the first step of cleaning to begin is, $\beta_G(u) = d(u) = \delta(G).$ Note that these conditions do not guarantee that the graph will be cleaned. The conditions merely assure at least the first step of cleaning.\\ \\
\noindent If a simple connected graph $G$ is orientated to become a directed graph, brushes may only clean along an out-arc from a vertex. Cleaning may initiate from a vertex $v$\ if and only if $\beta_G(v) \geq d^+(v)$ and $d^-(v) =0.$ The order in which vertices sequentially initiate cleaning is called the \emph{cleaning sequence} in respect of the orientation $\alpha_i$. The minimum number of brushes to be allocated to clean a graph for a given orientation $\alpha_i(G)$ is denoted $b_r^{\alpha_i}$. If an orientation $\alpha_i$ renders cleaning of the graph undoable we define $b_r^{\alpha_i} = \infty.$ An orientation $\alpha_i$ for which $b_r^{\alpha_i}$ is a minimum over all possible orientations is called \emph{optimal}.\\ \\
Now, since the graph $G$ having $\epsilon(G)$ edges can have $2^{\epsilon(G)}$ orientations, the \emph{optimal orientation} is not necessary unique. Let the set $\Bbb A =\{\alpha_i|\emph{ $\alpha_i$ an orientation of G}\}.$
\begin{lemma}
For a simple connected directed graph $G$, we have that:\\ \\ $b_r(G) = min_{\emph{over all $\alpha_i \in \Bbb A$}}(\sum_{v \in V(G)}max\{0, d^+(v) - d^-(v)\}) = min_{\forall \alpha_i}b_r^{\alpha_i}.$
\end{lemma}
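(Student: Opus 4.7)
My plan is to prove the lemma in two halves, corresponding to the two equalities. I will first establish that $\min_{\alpha_i} b_r^{\alpha_i} = b_r(G)$, and then that for each admissible orientation $b_r^{\alpha_i} = \sum_{v} \max\{0, d^+(v) - d^-(v)\}$, at which point the stated double equality follows immediately.

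For the first equality, the key observation is that every valid cleaning sequence on the undirected graph $G$ induces an orientation of $G$ by orienting each edge in the direction the brush traverses it. This induced orientation is necessarily acyclic, the cleaning sequence itself being a topological order, and the number of brushes used in the undirected cleaning is exactly $b_r^{\alpha_i}$ for this induced orientation. This gives $b_r(G) \geq \min_{\alpha_i \in \mathbb{A}} b_r^{\alpha_i}$. The reverse inequality is immediate, since any successful oriented cleaning is in particular a legitimate undirected cleaning using the same number of brushes.

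For the second equality, I would fix an orientation $\alpha_i$ and argue both directions. For the lower bound, when a vertex $v$ first initiates cleaning it must hold at least $d^+(v)$ brushes, while over the course of the process it can receive at most $d^-(v)$ brushes (one per in-arc, each cleaned before $v$ itself). Hence the initial allocation must satisfy $\beta(v) \geq \max\{0, d^+(v) - d^-(v)\}$, and summing over $v$ gives $b_r^{\alpha_i} \geq \sum_v \max\{0, d^+(v) - d^-(v)\}$. For the matching upper bound on an acyclic $\alpha_i$, I would fix a topological ordering $v_1, v_2, \ldots, v_n$, initially place exactly $\max\{0, d^+(v_i) - d^-(v_i)\}$ brushes at each $v_i$, and clean in that order; when $v_i$'s turn arrives, all of its in-arcs have already been cleaned by its predecessors, so $v_i$ holds precisely $\max\{0, d^+(v_i) - d^-(v_i)\} + d^-(v_i) \geq d^+(v_i)$ brushes and can legally initiate.

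The main obstacle I anticipate is the treatment of cyclic orientations, where the sum $\sum_v \max\{0, d^+(v) - d^-(v)\}$ is finite yet no vertex satisfies $d^-(v)=0$, so cleaning never initiates and $b_r^{\alpha_i} = \infty$. Interpreted literally the middle and right expressions of the lemma then disagree. I would reconcile this by appealing to the first equality: since any realisable cleaning sequence induces an \emph{acyclic} orientation, the outer minima on both sides are attained on acyclic orientations, on which the upper-bound construction above delivers the identity exactly.
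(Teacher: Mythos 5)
The paper offers no argument of its own for this lemma --- its ``proof'' is the single citation ``See [7]'' --- so there is no in-paper approach to compare against; your proposal is, in effect, a reconstruction of the argument from Messinger--Nowakowski--Pralat [5] and Tan [7]. The two main blocks of your argument are sound: the correspondence between undirected cleanings and cleanings of the induced (necessarily acyclic) orientations gives $b_r(G)=\min_{\alpha_i}b_r^{\alpha_i}$, and for an acyclic orientation the topological-order construction, combined with the counting bound $\beta(v)\geq\max\{0,d^+(v)-d^-(v)\}$, gives $b_r^{\alpha_i}=\sum_{v}\max\{0,d^+(v)-d^-(v)\}$. You are also right to single out cyclic orientations as the sore point.

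The gap is in your reconciliation of that sore point. You assert that ``the outer minima on both sides are attained on acyclic orientations.'' That is true for $\min_{\alpha_i}b_r^{\alpha_i}$, since cyclic orientations contribute $\infty$, but it is false for $\min_{\alpha_i}\sum_{v}\max\{0,d^+(v)-d^-(v)\}$: the fact that every realisable cleaning sequence induces an acyclic orientation tells you nothing about where the minimum of this purely combinatorial sum is attained. Concretely, take $G=K_3$ with the cyclic orientation $v_1\rightarrow v_2\rightarrow v_3\rightarrow v_1$; every vertex has $d^+(v)=d^-(v)=1$, so the sum is $0$, whereas $b_r(K_3)=2$. Hence, with $\Bbb A$ taken to be the set of all $2^{\epsilon(G)}$ orientations as the paper defines it, the middle expression of the lemma is genuinely smaller than the other two, and no proof can close that gap. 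The statement is only correct when the minimum in the middle expression is restricted to acyclic orientations (equivalently, orientations induced by an ordering of the vertices), which is how the result appears in [5] and [7]; with that restriction made explicit your argument goes through, but it must be imposed, not derived from the behaviour of cleaning sequences.
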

\begin{proof}
See [7].
\end{proof}
\noindent Although we mainly deal with simple connected graphs it is easy to see that for set of simple connected graphs $\{G_1, G_2, G_3, ..., G_n\}$ we have that, $b_r(\cup_{\forall i}G_i) = \sum\limits_{i=1}^{n}b_r(G_i).$
\subsection{Mycielskian graph $\mu(G)$ of a graph, $G$}
\noindent Mycielski [6]  introduced an interesting graph transformation in 1955. The transformation can be described as follows:\\ \\
(1) Consider any simple connected graph $G$ on $n \geq 2$ vertices labelled $v_1, v_2, v_3, ..., v_n$ and edge set $E(G)$.\\
(2) Consider the extended vertex set $V(G) \cup \{x_1, x_2, x_3, ..., x_n\}$ and add the edges $\{v_ix_j, v_jx_i|$ iff $v_iv_j \in E(G)\}.$\\
(3) Add one more vertex $w$ together with the edges $\{wx_i| \forall i\}.$\\ \\
The transformed graph \emph{(Mycielskian graph of G or Mycielski G)} denoted $\mu(G),$ is the simple connected graph with $V(\mu(G)) = V(G) \cup \{x_1, x_2, x_3, ..., x_n\} \cup \{w\}$ and $E(\mu(G)) = E(G) \cup \{v_ix_j, v_jx_i|$ iff $v_iv_j \in E(G)\} \cup \{wx_i| \forall i\}.$ 
\section{Brush Numbers of Mycielskian Graphs}
\noindent In general we have that if $\beta_{G'}(v)$ at a particular cleaning step has $\beta_{G'}(v) > d_{G'}(v)$, exactly $\beta_{G'}(v) - d_{G'}(v)$ brushes are left redundant and can clean along new edges linked to vertex $v$ if such are added through transformation of the graph $G$. It is known that for $b_r(G)$ an optimal orientation exists and brushes may only clean along out-arcs of a vertex. Construct the following directed Mycielskian graph of $G$, denoted $\mu^{\rightarrow}(G).$\\ \\
(1) Consider any simple connected graph $G$ on $n \geq 2$ vertices labelled $v_1, v_2, v_3, ..., v_n$ and edge set $E(G)$.\\
(2) Orientate $G$ corresponding to an optimal orientation associated with $b_r(G)$, denoted $G^{\rightarrow}_{b_r(G)}$.\\
(3) Consider the extended vertex set $V(G) \cup \{x_1, x_2, x_3, ..., x_n\}$ and add the arcs $\{(v_i,x_j), (v_j,x_i)|$ iff $v_iv_j \in E(G)\}.$\\
(3) Add one more vertex $w$ together with the arcs $\{(x_i, w)| \forall i\}.$\\ \\
Knowing that after adding an edge $e$ (\emph{or arc}) to a graph $G$ we have $b_r(G + e) \geq b_r(G)$ enables us to determine the brush number of the directed Mycielskian graph, $\mu^{\rightarrow}(G).$
\begin{theorem}(Tshegofatso's theorem)
For a simple connected graph $G$ on, $n \geq 2$ vertices the brush number of the Mycielskian graph of $G$ is given by:\\ \\
$b_r(\mu(G)) = b_r(\mu^{\rightarrow}(G))= 2\sum\limits_{i=1}^{n}d^+_{G^{\rightarrow}_{b_r(G)}}(v_i).$
\end{theorem}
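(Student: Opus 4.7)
The plan is to split the double equality into two independent tasks: a direct computation of $b_r(\mu^{\rightarrow}(G))$ from Lemma 1.1, and an optimality argument showing that no other orientation of $\mu(G)$ improves on it.

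For the computation, I apply Lemma 1.1 to the orientation $\mu^{\rightarrow}(G)$. At each $v_i$, the out-arcs are those of $G^{\rightarrow}_{b_r(G)}$ together with the $d_G(v_i)$ new arcs $(v_i,x_j)$ for $v_j \in N_G(v_i)$, so $d^+(v_i)=d^+_{G^{\rightarrow}_{b_r(G)}}(v_i)+d_G(v_i)$ and $d^-(v_i)=d^-_{G^{\rightarrow}_{b_r(G)}}(v_i)$. Substituting the identity $d_G(v_i)=d^+_{G^{\rightarrow}_{b_r(G)}}(v_i)+d^-_{G^{\rightarrow}_{b_r(G)}}(v_i)$ gives $d^+(v_i)-d^-(v_i)=2d^+_{G^{\rightarrow}_{b_r(G)}}(v_i)\geq 0$. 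At each $x_i$, $d^+(x_i)=1$ and $d^-(x_i)=d_G(v_i)\geq 1$ (since $G$ is connected on $n\geq 2$ vertices), so the contribution is $0$; at $w$, $d^+(w)=0$ and $d^-(w)=n$, contribution $0$. Summing, $\sum_v \max\{0,d^+(v)-d^-(v)\}=2\sum_{i=1}^{n}d^+_{G^{\rightarrow}_{b_r(G)}}(v_i)$. Cleanability follows because $G^{\rightarrow}_{b_r(G)}$ is acyclic (otherwise no cleaning can begin, so it would not be optimal) and the new arcs place the $x_i$'s and then $w$ strictly after $V(G)$ in topological order, so $\mu^{\rightarrow}(G)$ is acyclic and admits a valid cleaning sequence.

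For optimality, the inequality $b_r(\mu(G))\leq b_r(\mu^{\rightarrow}(G))$ is immediate because $\mu^{\rightarrow}(G)$ is one of the orientations of $\mu(G)$. The reverse inequality is the crux: for every cleanable orientation $\alpha$ of $\mu(G)$, $\sum_v \max\{0,d^+_\alpha(v)-d^-_\alpha(v)\}\geq 2\sum_{i=1}^{n}d^+_{G^{\rightarrow}_{b_r(G)}}(v_i)=2\epsilon(G)$. My plan is a local-exchange reduction: starting from an arbitrary cleanable orientation, show that each arc $x_j\to v_i$ can be reversed into $v_i\to x_j$ and each arc $w\to x_i$ into $x_i\to w$ without increasing the objective and while preserving cleanability. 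Once all such reversals are performed, the orientation agrees with $\mu^{\rightarrow}(G)$ on every non-$G$-arc, and the residual freedom is only in the orientation $\beta$ of the $G$-edges, where the objective evaluates to $2\sum_{i=1}^{n}d^+_\beta(v_i)=2\epsilon(G)$ regardless of $\beta$.

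The principal obstacle is the local-exchange step, because a naive flip may strictly decrease $\sum_v \max\{0,d^+-d^-\}$ while creating a directed cycle that renders the orientation uncleanable. A small check in $\mu(P_3)$ confirms this: after a single reversal of the arc $v_1\to x_2$ the resulting digraph contains the cycle $v_1\to v_2\to v_3\to x_2\to v_1$, so the apparent improvement is illegal. The remedy I will pursue is either (i) pair each illegal flip with a compensating re-orientation of a $G$-edge that preserves acyclicity, exploiting the fact that $G^{\rightarrow}_{b_r(G)}$ is only one among possibly many optimal orientations, or (ii) replace the exchange argument with a global double-count: for every edge $v_iv_j\in E(G)$ the three edges $v_iv_j,\,v_ix_j,\,v_jx_i$ contribute jointly at least $2$ to $\sum_v \max\{0,d^+-d^-\}$ in any cleanable orientation, which summed over $\epsilon(G)$ edges yields the required lower bound of $2\epsilon(G)$. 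Verifying this per-edge contribution by a case analysis on the eight possible orientations of the three edges, together with the acyclicity constraint carried over from the rest of $\mu(G)$, is the heart of the proof.
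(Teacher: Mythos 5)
Your upper-bound half is sound and in fact more careful than the paper's: applying Lemma 1.1 to the single acyclic orientation $\mu^{\rightarrow}(G)$ does give $b_r(\mu^{\rightarrow}(G)) = 2\sum_{i=1}^{n} d^+_{G^{\rightarrow}_{b_r(G)}}(v_i)$, and your observation that this quantity is simply $2\epsilon(G)$, independent of which optimal orientation of $G$ was chosen, is correct and worth stating explicitly. The genuine gap is exactly where you locate it: the lower bound $b_r(\mu(G)) \ge 2\epsilon(G)$ over all orientations of $\mu(G)$ is never established. You offer two candidate strategies (the local-exchange reduction and the per-edge-triple count of at least $2$) but carry out neither, so as written the proposal proves only $b_r(\mu(G)) \le 2\epsilon(G)$. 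The paper's own proof has the same defect -- its three cases merely tally the brushes needed for the one orientation $\mu^{\rightarrow}(G)$ and never compare against any competing orientation -- so you have correctly identified the crux that the paper skips; you just have not filled it.

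Moreover, neither of your two strategies can be completed, because the inequality you are trying to prove is false: the obstruction you noticed in $\mu(P_3)$ is not a technical nuisance but a counterexample in disguise. Take $G = P_3$ with edges $v_1v_2,\, v_2v_3$, so $2\epsilon(G) = 4$. Orient $\mu(P_3)$ by $v_1 \to v_2$, $v_3 \to v_2$, $v_1 \to x_2$, $x_2 \to v_3$, $v_2 \to x_1$, $v_2 \to x_3$, $x_2 \to w$, $x_3 \to w$, $w \to x_1$. This orientation is acyclic (a topological order is $v_1, x_2, v_3, v_2, x_3, w, x_1$) and $\sum_{v}\max\{0, d^+(v) - d^-(v)\} = 2 + 1 = 3$, the only positive excesses being at $v_1$ and $x_2$. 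Concretely, two brushes on $v_1$ and one on $x_2$, fired in the order $v_1, x_2, v_3, v_2, x_3, w, x_1$, clean every edge of $\mu(P_3)$. Hence $b_r(\mu(P_3)) \le 3 < 4$, so the first equality of the theorem fails, and in particular your proposed claim that each triple $v_iv_j,\, v_ix_j,\, v_jx_i$ contributes at least $2$ in every cleanable orientation cannot hold. The most that either your argument or the paper's actually delivers is the one-sided bound $b_r(\mu(G)) \le 2\epsilon(G)$, and any repair of the statement must retreat to that inequality.
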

\begin{proof}
Allocating the $b_r(G)$ brushes to the corresponding vertices of $G$ implies that the same allocations to $G^\rightarrow_{b_r(G)}$ will ensure cleaning $G^\rightarrow_{b_r(G)}$ with minimum brushes. Now consider the directed Mycielski $G$, $\mu^\rightarrow(G).$\\ \\
Consider any vertex $v \in V(G).$ Note that $d_{G^\rightarrow_{b_r(G)}}(v) = d^+_{G^\rightarrow_{b_r(G)}}(v) + d^-_{G^\rightarrow_{b_r(G)}}(v).$\\ \\
\textbf{Case 1:} Assume $d^-_{G^\rightarrow_{b_r(G)}}(v) = d^+_{G^\rightarrow_{b_r(G)}}(v).$ Clearly \emph{zero} brushes are initially allocated to \\ \\$v$ and at some iterative cleaning step exactly $d^-_{G^\rightarrow_{b_r(G)}}(v)$ brushes reaches $v$. These brushes\\ \\ will exit from $v$ along the $d^+_{G^\rightarrow_{b_r(G)}}(v)$ arcs if and only if a minimum of $d_{G^\rightarrow_{b_r(G)}}(v) = d^+_{G^\rightarrow_{b_r(G)}}(v) +\\ \\ d^-_{G^\rightarrow_{b_r(G)}}(v) = 2d^+_{G^\rightarrow_{b_r(G)}}(v)$ brushes are added to $v$ to clean the $2d^+_{G^\rightarrow_{b_r(G)}}(v)$ arcs linking $v$ with\\ \\ $2d^+_{G^\rightarrow_{b_r(G)}}(v)$ vertices $x_i \in\{x_1, x_2, x_3, ..., x_n\}.$\\ \\
So it follows that for all vertices satisfying this case we have the partial minimum sum\\ \\ of brushes, $2\sum_{v \in V(G), d^-_{G^\rightarrow_{b_r(G)}}(v) = d^+_{G^\rightarrow_{b_r(G)}}(v)}d^+_{G^\rightarrow_{b_r(G)}}(v).$\\ \\ \\
\textbf{Case 2:} Assume $d^-_{G^\rightarrow_{b_r(G)}}(v) < d^+_{G^\rightarrow_{b_r(G)}}(v).$ Clearly a minimum of  $d^+_{G^\rightarrow_{b_r(G)}}(v) - d^-_{G^\rightarrow_{b_r(G)}}(v)$\\ \\ brushes must be added to $v$ to clean all out-arcs from $v$ in $G^\rightarrow$. In addition a\\ \\ minimum of $d^-_{G^\rightarrow_{b_r(G)}}(v) + 2(d^+_{G^\rightarrow_{b_r(G)}}(v) - d^-_{G^\rightarrow_{b_r(G)}}(v))$ brushes must be allocated to $v$ to clean\\ \\ the $d^+_{G^\rightarrow_{b_r(G)}}(v) - d^-_{G^\rightarrow_{b_r(G)}}(v)$ arcs linking $v$ with vertices $x_i \in\{x_1, x_2, x_3, ..., x_n\}.$\\ \\
It follows that the minimum number of additional brushes is given by:\\ \\
$2(d^+_{G^\rightarrow_{b_r(G)}}(v) + d^-_{G^\rightarrow_{b_r(G)}}(v)) + 2(d^+_{G^\rightarrow_{b_r(G)}}(v) - d^-_{G^\rightarrow_{b_r(G)}}(v)) = 2d^+_{G^\rightarrow_{b_r(G)}}(v).$\\ \\
So it follows that for all vertices satisfying this case we have the partial minimum sum\\ \\ of brushes, $2\sum_{v \in V(G), d^-_{G^\rightarrow_{b_r(G)}}(v) < d^+_{G^\rightarrow_{b_r(G)}}(v)}d^+_{G^\rightarrow_{b_r(G)}}(v).$\\ \\ \\
\textbf{Case 3:} Assume $d^-_{G^\rightarrow_{b_r(G)}}(v) > d^+_{G^\rightarrow_{b_r(G)}}(v).$ The proof follows similar to Case 2.\footnote{The reader can formalise the proof of Case 3.}\\ \\
Since all cases have been settled and all vertices are accounted for, the result:\\ \\
$b_r(\mu(G)) = b_r(\mu^{\rightarrow}(G))= 2\sum_{v \in V(G), d^-_{G^\rightarrow_{b_r(G)}}(v) = d^+_{G^\rightarrow_{b_r(G)}}(v)}d^+_{G^\rightarrow_{b_r(G)}}(v) + \\ \\
2\sum_{v \in V(G), d^-_{G^\rightarrow_{b_r(G)}}(v) < d^+_{G^\rightarrow_{b_r(G)}}(v)}d^+_{G^\rightarrow_{b_r(G)}}(v) + 2\sum_{v \in V(G), d^-_{G^\rightarrow_{b_r(G)}}(v) > d^+_{G^\rightarrow_{b_r(G)}}(v)}d^+_{G^\rightarrow_{b_r(G)}}(v) =\\ \\ 2\sum\limits_{i=1}^{n}d^+_{G^{\rightarrow}_{b_r(G)}}(v_i),$ follows conclusively.
\end{proof}
\textbf{\emph{Open access:}} This paper is distributed under the terms of the Creative Commons Attribution License which permits any use, distribution and reproduction in any medium, provided the original author(s) and the source are credited. \\ \\ \\
References (Limited) \\ \\
$[1]$  Bondy, J.A., Murty, U.S.R., \emph {Graph Theory with Applications,} Macmillan Press, London, (1976). \\
$[2]$ Kok, J., \emph{A note on the Brush Number of Jaco Graphs, $J_n(1), n \in \Bbb N$}, arXiv: 1412.5733v1 [math.CO], 18 December 2014. \\
$[3]$ McKeil, S., \emph{Chip firing cleaning process}, M.Sc. Thesis, Dalhousie University, (2007).\\ 
$[4]$ Messinger, M. E., \emph{Methods of decontaminating a network}, Ph.D. Thesis, Dalhousie University, (2008).\\
$[5]$ Messinger, M.E., Nowakowski, R.J., Pralat, P., \emph{Cleaning a network with brushes}. Theoretical Computer Science, Vol 399, (2008), 191-205.\\
$[6]$ Mycielski, J., \emph{Sur le coloriages des graphes}, Colloq. Maths. Vol 3, (1955), 161-162.\\
$[7]$ Ta Sheng Tan,\emph{The Brush Number of the Two-Dimensional Torus,} arXiv: 1012.4634v1 [math.CO], 21 December 2010.
\end{document}